\documentclass[11pt]{article}
\usepackage{amsmath}
\usepackage{amsfonts}
\usepackage{latexsym}
\usepackage{hyperref}
\usepackage{enumerate}
\usepackage{wasysym}
\usepackage{lipsum}
\usepackage{amsthm}
\usepackage{amssymb}
\usepackage{csquotes}
\usepackage{xcolor}
\usepackage{tikz}
\usepackage{amssymb,amsmath,amsthm}
\usepackage{amsfonts}
\usetikzlibrary{positioning,chains,fit,shapes,calc}

\setlength{\oddsidemargin}{.25in}
\setlength{\evensidemargin}{.25in}
\setlength{\textwidth}{6.4in}
\setlength{\topmargin}{-0.4in}
\setlength{\textheight}{9.2in}

\newtheorem{theorem}{Theorem}[section] 

\newtheorem{lemma}[theorem]{Lemma}

\newtheorem{definition}[theorem]{Definition}

\newtheorem{construction}[theorem]{Construction}

\def\rc{\mathrm{rc}}

\title{Rainbow Connection for Complete Multipartite Graphs}

\author{
Igor Araujo \footnote{University of Illinois at Urbana-Champaign. Email: \texttt{igoraa2@illinois.edu}. Research partially supported by UIUC Campus Research Board RB 22000.} \and 
Kareem Benaissa \footnote{University of Illinois at Urbana-Champaign. Email: \texttt{kareemyusefben@gmail.com}.} \and 
Richard Bi \footnote{University of Illinois at Urbana-Champaign. Email: \texttt{rbi3@illinois.edu}.} \and 
Sean English \footnote{University of North Carolina Wilmington. Email: \texttt{englishs@uncw.edu}} \and 
Shengan Wu \footnote{University of Illinois at Urbana-Champaign. Email: \texttt{shengan2@illinois.edu}.} \and 
Pai Zheng \footnote{University of Illinois at Urbana-Champaign. Email: \texttt{paiz3@illinois.edu}.}}

\date{}

\begin{document}
	\maketitle
	\begin{abstract}    
			A path in an edge-colored graph is said to be rainbow if no color repeats on it. An edge-colored graph is said to be rainbow $k$-connected if every pair of vertices is connected by $k$ internally disjoint rainbow paths. The rainbow $k$-connection number $\mathrm{rc}_k(G)$ is the minimum number of colors $\ell$ such that there exists a coloring with $\ell$ colors that makes $G$ rainbow $k$-connected. Let $f(k,t)$ be the minimum integer such that every $t$-partite graph with part sizes at least $f(k,t)$ has $\mathrm{rc}_k(G) \le 4$ if $t=2$ and $\mathrm{rc}_k(G) \le 3$ if $t \ge 3$. Answering a question of Fujita, Liu and Magnant, we show that
		\[
		f(k,t) = \left\lceil \frac{2k}{t-1} \right\rceil
		\]
		for all $k\geq 2$, $t\geq 2$. We also give some conditions for which $\mathrm{rc}_k(G) \le 3$ if $t=2$ and $\mathrm{rc}_k(G) \le 2$ if $t \ge 3$.

  \medskip
\noindent\textbf{Keywords:} Rainbow Connection, Multipartite 

\noindent\textbf{2020 Mathematics Subject Classification:} 05C15, 05C38, 05C40
	\end{abstract}

	\section{Introduction}

	Let $G$ be a (simple, undirected) graph with an edge coloring $c:E(G)\to [\ell]$ for some integer $\ell$. We say a subgraph $F\subseteq G$ is \textbf{rainbow} if every edge of $F$ receives a different color under $c$. We will say that $(G,c)$ is \textbf{rainbow connected} if there is a rainbow path that connects every pair of vertices in $G$.
	
	Rainbow connection was first introduced by Chartrand, Johns, MeKeon and Zhang~\cite{cjmz2008}, where the authors defined the \textbf{rainbow connection number} $\rc(G)$, which is the least integer $\ell$ such that there exists a coloring $c:E(G)\to [\ell]$ such that $(G,c)$ is rainbow-connected. The same set of authors extended this definition to include higher connectivity~\cite{cjmz2009}. In particular, we will call $(G,c)$ \textbf{rainbow $k$-connected} if every pair of vertices in $G$ is connected by $k$ pairwise internally disjoint rainbow paths. The \textbf{rainbow $k$-connection number}, $\rc_k(G)$ is the minimum choice of $\ell$ such that there exists an edge coloring $c:E(G)\to [\ell]$ where $(G,c)$ is rainbow $k$-connected. Note that for a graph $G$ to have any hope of being rainbow $k$-connected, the graph itself must be $k$-connected.
	
	In~\cite{cjmz2009}, the authors considered complete graphs and balanced complete bipartite graphs and showed that for any integer $k\geq 1$,
	\[
	\mathrm{rc}_k(K_{(k+1)^2})=2,
	\]
	and
	\[
	\mathrm{rc}_k(K_{k^2,k^2})=3.
	\]
	This led the authors to ask for the least integer $f=f(k)$ such that $\mathrm{rc}_k(K_f)=2$ and the least integer $g=g(k)$ such that $\mathrm{rc}_k(K_{g,g})=3$. Li and Sun showed that $f=O(k^{3/2})$~\cite{lisun1}, and later showed that for any $r\geq g(k)$, we have $\mathrm{rc}_k(K_{r,r})=3$~\cite{lisun2}. Fujita, Liu and Magnant provided an improved upper bound in the bipartite case, $g(k)\leq 2k+o(k)$ using the probabilistic method~\cite{FLM}. In addition, they also showed that for $t\geq 3$ and $r\geq \frac{2k}{t-1}+o(k)$, if $K$ is the balanced complete $t$-partite graph with all parts of size $r$, then
	\[
	rc_k(K)=2.
	\]
	As more work came out on the behavior of the rainbow $k$-connection numbers for complete graphs and balanced complete multipartite graphs, the unbalanced case started to become of interest as well.
	
	Chartrand et. al. asked if there existed a function $h(k)$ such that for all $s,t$ with $h(k)\leq s\leq t$, we have $\mathrm{rc}_k(K_{s,t})=2$~\cite{cjmz2009}, however there is no such function as $\mathrm{rc}(K_{s,3^s+1})=4$ for all $s\in\mathbb{N}$~\cite{cjmz2008}. Thus, we could not expect a lower bound on the part sizes of an unbalanced complete bipartite graph to be able to force the rainbow $k$-connection number to get down to $3$, as happens in the balanced case. Similarly, for complete $t$-partite graphs, if $n_1\leq n_2\leq \dots\leq n_t$, and $n_t>2^m$, where $m=\sum_{i=1}^{t-1}n_t$, then $\mathrm{rc}(K_{n_1,n_2,\dots,n_t})=3$~\cite{cjmz2008}. Thus, in the unbalanced complete multipartite setting, again we cannot expect a lower bound on the part sizes to force the rainbow connection number down to $2$. One could wonder how close we can get though.

	\begin{definition}
		Given a complete $t$-partite graph $K_{n_1, \dots, n_t}$, let
		$f(k, t)$ be the minimum part size of $K$ such that,
		\[
		\rc_k(K) \leq 
		\begin{cases}
			4, & \text{if } t = 2, \\
			3, & \text{if }t > 2,
		\end{cases}
		\] 
		if it exists.
	\end{definition}
	Fujita, Lui and Magnant asked if such a function $f(k,t)$ exists~\cite{FLM}. In this work, we answer this question in the affirmative, showing that $f(k,t)$ exists for all pairs $k,t\geq 2$, and further we determine this function exactly.
	
	\begin{theorem} For all $k,t\geq 2$,
		\[
		f(k, t) = \left\lceil\frac{2k}{t-1}\right\rceil.
		\]
	\end{theorem}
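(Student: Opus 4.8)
The plan is to prove the two matching inequalities $f(k,t)\le\lceil 2k/(t-1)\rceil$ and $f(k,t)\ge\lceil 2k/(t-1)\rceil$. The organizing principle is a counting observation: if $K=K_{n_1,\dots,n_t}$ has all parts of size at least $\lceil 2k/(t-1)\rceil$, then for every part $V_i$ one has $|V\setminus V_i|=\sum_{j\ne i}n_j\ge(t-1)\lceil 2k/(t-1)\rceil\ge 2k$, which is exactly the number of interior vertices needed to accommodate $k$ internally disjoint paths of length $3$ (or of length $4$ when $t=2$) between two vertices of $V_i$; whereas if some part has size only $\lceil 2k/(t-1)\rceil-1$ one can arrange for some part to have at most $2k-1$ vertices outside it, leaving too little room.

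\medskip\noindent\textbf{Lower bound.} First I would fix $r=\lceil 2k/(t-1)\rceil-1$ and take $K$ to be the complete $t$-partite graph with parts $V_1,\dots,V_{t-1}$ of size $r$ and one large part $V_t$ of size $N$, where $N$ exceeds the number of ways to color a fixed set of $(t-1)r$ edges with the target palette (so $N=3^{(t-1)r}+1$ for $t\ge 3$, and $N=4^{2k-1}+1$ for $t=2$). Write $W=V_1\cup\dots\cup V_{t-1}$, so $|W|=(t-1)r\le 2k-1$. Given any coloring with the target palette, by pigeonhole there are distinct $b,b'\in V_t$ with $c(bw)=c(b'w)$ for every $w\in W$. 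Since $V_t$ is independent, every $b$--$b'$ path meets $W$. Moreover, no $b$--$b'$ path of length $2$ is rainbow: such a path is $b$--$w$--$b'$ with $w\in W$, and its two edges share a color. Hence every rainbow $b$--$b'$ path has length at least $3$, so it contains its first and its last interior vertex as two distinct vertices of $W$; therefore a family of internally disjoint rainbow $b$--$b'$ paths has at most $\lfloor|W|/2\rfloor\le k-1$ members. Thus no coloring with the target palette makes $K$ rainbow $k$-connected, so $\rc_k(K)\ge 4$ if $t\ge 3$ and $\rc_k(K)\ge 5$ if $t=2$ (or $K$ fails to be $k$-connected, which is no better), and therefore $f(k,t)\ge r+1$.

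\medskip\noindent\textbf{Upper bound.} Here I would construct an explicit coloring of $K=K_{n_1,\dots,n_t}$ (all $n_i\ge\lceil 2k/(t-1)\rceil$) using $3$ colors if $t\ge 3$, resp.\ $4$ colors if $t=2$. For $t=3$ (where $\lceil 2k/(t-1)\rceil=k$) a clean choice already works: color each edge by the index of the unique part it avoids, so an edge between $V_a$ and $V_b$ gets the color $c$ with $\{a,b,c\}=\{1,2,3\}$. Then for $u,v$ in a common part $V_i$ and $\{i,p,q\}=\{1,2,3\}$, every path $u$--$x$--$y$--$v$ with $x\in V_p,\ y\in V_q$ has colors $q,i,p$ and is rainbow, and since $|V_p|,|V_q|\ge k$ there are $k$ internally disjoint such paths; for $u\in V_i,\ v\in V_j$ with $i\ne j$, the edge $uv$ together with the $\ge k$ two-edge paths $u$--$w$--$v$ through the third part (which use two distinct colors) gives $k$ internally disjoint rainbow paths. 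For general $t\ge 3$ the same scheme — length-$3$ detours for same-part pairs, the joining edge plus length-$2$ detours for distinct-part pairs — must be implemented by a more intricate $3$-coloring, because a part can be much smaller than $k$, so the $2k$ interior vertices required for a same-part pair in $V_i$ have to be spread over several of the parts outside $V_i$, and the coloring must make all the resulting short paths rainbow simultaneously, for every pair in every part and every admissible profile of part sizes (in particular the ``one huge part'' profile used in the lower bound). For $t=2$ with $4$ colors, when the two parts are not too large the same-part pairs are handled by length-$2$ detours, which reduces to building an edge-coloring of $K_{m,n}$ whose rows and columns pairwise disagree in at least $k$ coordinates — essentially a short distance-$k$ code over a $4$-letter alphabet — while if one part dwarfs the other the same-part pairs in the big part need length-$4$ detours routed through the small part; distinct-part pairs again use the joining edge plus length-$3$ detours. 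One may also invoke the Fujita--Liu--Magnant bounds to dispose of the regime where the parts are comfortably larger than $\lceil 2k/(t-1)\rceil$, reducing the real work to near-extremal profiles.

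\medskip The lower bound is the routine direction. The main obstacle is the upper bound for $t\ge 4$ (together with the length-$4$ bookkeeping for $t=2$): one must exhibit a single $3$-coloring (resp.\ $4$-coloring) that serves intra-part pairs in every part and inter-part pairs for every pair of parts at once, against the tight budget of exactly $2k$ interior vertices outside a part and with no slack to lengthen paths. I expect this to require a case split on the distribution of part sizes, and, in the balanced cases, sharpened versions of the coding-theoretic and probabilistic arguments of Fujita, Liu and Magnant with the $o(k)$ error removed.
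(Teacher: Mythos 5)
Your lower bound is complete and essentially identical to the paper's: one huge part forces, by pigeonhole, two vertices with identical color patterns to the rest of the graph, and every rainbow path between them then consumes two distinct vertices from the small parts, of which there are at most $2k-1$ in total. (Your unified ``first and last interior vertex'' phrasing even handles $t=2$ and $t\ge 3$ together, where the paper uses two separate lemmas.) Your $t=3$ coloring --- color each edge by the index of the part it avoids --- is also correct and coincides, after relabeling, with the paper's construction specialized to $t=3$.

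The genuine gap is the upper bound for $t=2$ and for $t\ge 4$: you describe the properties a coloring would need but never exhibit one, and you end by conjecturing that case splits on the part-size profile and sharpened coding-theoretic or probabilistic arguments are required. They are not, and without an actual construction the main direction of the theorem is unproven. For $t=2$ the paper splits each side into two halves of size at least $k$, gives the four resulting bipartite blocks four distinct colors, and connects two vertices of $A_1$ by the $k$ length-$4$ paths $(u,b_{1,j},a_{2,j},b_{2,j},v)$ with color sequence $1,3,4,2$; this one coloring works for all $a,b\ge 2k$ with no case analysis, whereas your proposed ``length-$2$ detours when the parts are not too large'' reduces to a $4$-ary distance-$k$ code and cannot work when one part is enormous. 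For even $t\ge 4$ the paper pairs the parts as $(A_i,B_i)$, colors all $A$--$A$ and $B$--$B$ edges with color $1$, the matched cross pairs $A_iB_i$ with color $2$, and the unmatched cross pairs with color $3$; two vertices of $A_1$ are then joined by a careful packing of length-$3$ rainbow paths through $A_2\cup B_1\cup B_2$ and through the remaining pairs, yielding at least $\bigl((t-1)s-1\bigr)/2\ge k-\tfrac{1}{2}$ paths, so integrality closes the count. Your instinct that there is no slack is correct, but the resolution is this explicit elementary construction, not a probabilistic refinement of Fujita--Liu--Magnant (whose bounds carry an $o(k)$ error and would not give the exact threshold). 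The odd case simply adds one leftover part $X$ joined to $A$ with color $1$ and to $B$ with color $3$. Supplying these colorings, and verifying the disjoint rainbow paths they produce, is the real work of the theorem and is missing from your proposal.
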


	We also explore some instances where complete bipartite and complete multipartite graphs have rainbow $k$-connection number $3$ and $2$, respectively. For more information about rainbow connectivity, we direct the reader to the dynamic survey by Li and Sun~\cite{lisun3}.
	
	\section{Upper bound on \texorpdfstring{$f(k,t)$}{2}}
	
	In this section, we show that $f(k,t) \le \lceil\frac{2k}{t-1}\rceil$ for every $k,t \ge 2$. We first show that $f(k,2)\le 2k$. This is, we prove the desired upper bound for bipartite graphs.
	
	\begin{lemma}
		Let $a,b,k\in \mathbb{N}$ with $a,b\geq 2k$. Then
		\[
		\rc_k(K_{a,b})\leq 4.
		\]
	\end{lemma}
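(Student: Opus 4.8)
We must colour $E(K_{a,b})$ with four colours so that every pair of vertices is joined by $k$ internally disjoint rainbow paths. Write the two parts as $A$ and $B$, with $|A|=a$ and $|B|=b$. Since the graph is bipartite, a path between a vertex of $A$ and a vertex of $B$ has odd length and a path between two vertices of the same part has even length, so the plan is to use only paths of length $1$ or $3$ in the first situation and of length $2$ or $4$ in the second, and to split the verification into three cases: (i) $u\in A$, $v\in B$; (ii) $u,v\in A$; (iii) $u,v\in B$, the last being symmetric to the second.

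Case (i) is the easy one and I would dispose of it first. For $u\in A$ and $v\in B$, take the edge $uv$ as one rainbow path and, for the remaining $k-1$ paths, use internally disjoint length-$3$ paths $u\,b_i\,a_i\,v$ with the $b_i\in B\setminus\{v\}$ distinct and the $a_i\in A\setminus\{u\}$ distinct; enough vertices are available because $a,b\ge 2k$, and since four colours are available there is ample slack to force the three edges of each such path to receive distinct colours. The crux is case (ii). Here a length-$2$ path $u\,b\,v$ is rainbow precisely when $c(ub)\ne c(vb)$, so connecting every pair of $A$ by length-$2$ paths alone would require the vectors $\bigl(c(ub)\bigr)_{b\in B}\in[4]^{b}$ to form a code of minimum Hamming distance $k$, which is impossible once $a$ is large compared with $b$. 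Hence the colouring must also supply length-$4$ paths $u\,b\,a\,b'\,v$ (with $b,b'\in B$, $a\in A$) for the pairs of $A$ whose colour vectors are close; the point of $b\ge 2k$ is exactly that $B$ then contains $k$ pairwise disjoint pairs $\{b,b'\}$, and the point of $a\ge 2k$ is that $A$ then supplies both endpoints together with $k$ disjoint ``hub'' vertices $a$.

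Concretely I would build the colouring around a ``core'' $K_{2k,2k}$. Fix $A_{0}\subseteq A$ and $B_{0}\subseteq B$ with $|A_{0}|=|B_{0}|=2k$, and colour $K[A_{0},B_{0}]$ by a structured four-colouring in which both the row vectors $\bigl(c(a,b)\bigr)_{b\in B_{0}}$ and the column vectors $\bigl(c(a,b)\bigr)_{a\in A_{0}}$ are pairwise far apart in Hamming distance while no colour is used too often in any single row or column (such an array exists, e.g.\ of the form $c(a,b)=\chi(a+b)$ over a field of size a bit larger than $2k$ with $\chi$ a balanced map onto $[4]$, or by a short probabilistic argument). For a vertex $x\in A\setminus A_{0}$ I would colour all of its edges to $B_{0}$ by one fixed rich pattern $\phi\colon B_{0}\to[4]$ depending only on the $B_{0}$-endpoint, and symmetrically colour $A_{0}\times(B\setminus B_{0})$; the remaining edges, those inside $(A\setminus A_{0})\times(B\setminus B_{0})$, may all get a single colour. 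With this colouring any two vertices outside the core, and any outside vertex together with any core vertex, are joined by $k$ internally disjoint rainbow paths of length at most $4$ that run through the core (the disjointness coming from $|A_{0}|=|B_{0}|=2k$), while pairs inside the core are handled by the code property together with length-$4$ detours through the hubs.

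The main obstacle is the simultaneous bookkeeping: one must certify, for all three pair types at once, that $k$ rainbow paths can be chosen with pairwise disjoint interiors, which forces the patterns $\phi,\psi$ and the core array to be chosen compatibly rather than merely generically. The hardest sub-case is connecting two vertices of $A$ whose colour vectors to $B$ coincide, where every length-$4$ path $x\,b\,a\,b'\,y$ must use a hub $a$ that colours $\{b,b'\}$ with exactly the two colours absent from $x$'s colouring of $\{b,b'\}$; arranging this for $k$ disjoint pairs $\{b,b'\}$ simultaneously is precisely where the structure of the core colouring and the bound $a,b\ge 2k$ are used, and verifying it is the technical heart of the argument.
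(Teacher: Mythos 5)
Your write-up is a plan rather than a proof: you yourself flag that ``the main obstacle is the simultaneous bookkeeping'' and that verifying the hardest sub-case ``is the technical heart of the argument,'' and that heart is never supplied. Concretely, the existence of the core array with the stated Hamming-distance and balancedness properties is only asserted (``such an array exists, e.g.\ \dots or by a short probabilistic argument''), and even granting it, you never check that the three edges of each length-$3$ path, or the four edges of each length-$4$ detour through a hub, actually receive distinct colors, nor that the $k$ chosen paths for a given pair have pairwise disjoint interiors across all the pair types simultaneously. The sub-case you correctly identify as hardest --- two vertices of $A$ with identical color vectors, each of whose length-$4$ paths $x\,b\,a\,b'\,y$ needs a hub $a$ coloring $\{b,b'\}$ with precisely the two colors missing from $x$'s pattern on $\{b,b'\}$ --- is exactly where the argument would have to do real work, and it is left open. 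As it stands the lemma is not proved.

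The deeper issue is that the code machinery is unnecessary, and abandoning it dissolves the difficulty. With four colors you can simply give up on length-$2$ rainbow paths between vertices of the same part: partition $A=A_1\cup A_2$ and $B=B_1\cup B_2$ with each piece of size at least $k$ (possible since $a,b\ge 2k$), and color every $A_i$--$B_j$ edge with the color $(i,j)$, so that the color of an edge depends only on the blocks of its endpoints. Then for $u,v\in A_1$ the $k$ paths $u\,b_{1,j}\,a_{2,j}\,b_{2,j}\,v$ (colors $1,3,4,2$) are rainbow and internally disjoint; for $u\in A_1$, $v\in A_2$ the paths $u\,b_{1,j}\,v$ (colors $1,3$) suffice; and for $u\in A_1$, $v\in B_1$ the paths $u\,b_{2,j}\,a_{2,j}\,v$ (colors $2,4,3$) suffice. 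This is the paper's proof, and it shows that the Hamming-distance considerations you built the argument around --- which arise only if you insist on length-$2$ paths within a part --- can be bypassed entirely.
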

	
	\begin{proof}
		Let $A$ and $B$ be the partite sets of $K_{a,b}$ with $|A|=a$ and $|B|=b$. Let $A_1\cup A_2=A$ and $B_1\cup B_2=B$ be partitions of $A$ and $B$ such that $|A_1|,|A_2|,|B_1|,|B_2|\geq k$. Let $\{a_{i,1},a_{i,2},\dots,a_{i,k}\}\subseteq A_i$ and $\{b_{i,1},b_{i,2},\dots,b_{i,k}\}\subseteq B_i$ be collections of $k$ distinct vertices for each $i\in [2]$.
		
		Let $c:E(K_{a, b}) \to [4]$ be defined as
		\[
		c(uv) = 
		\begin{cases}
			1, & \text{if } u \in A_1 \text{ and } v \in B_1. \\
			2, & \text{if } u \in A_1 \text{ and } v \in B_2. \\
			3, & \text{if } u \in A_2 \text{ and } v \in B_1. \\
			4, & \text{if } u \in A_2 \text{ and } v \in B_2. \\
		\end{cases}
		\]
		See Figure~\ref{figure K_{a,b} coloring} for a diagram of the coloring.
		
		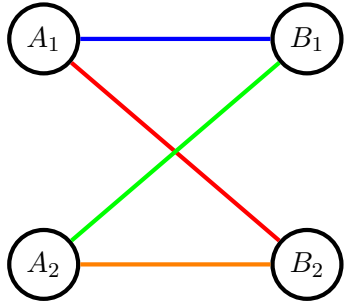
\begin{figure}
			\begin{center}
				\begin{tikzpicture}[ultra thick,
					every node/.style={draw,circle},
					every fit/.style={ellipse,draw,inner sep=-2pt,text width=2cm}]

					\begin{scope}[xshift=.25cm,yshift=3cm,start chain=going below,node distance=3mm]
						\foreach \i in {1}
						\node[on chain] (a\i)  {$A_1$};
					\end{scope}
					
					\begin{scope}[xshift=.25cm,yshift=0cm,start chain=going below,node distance=3mm]
						\foreach \i in {2}
						\node[on chain] (a\i)  {$A_2$};
					\end{scope}
					
					\begin{scope}[xshift=3.75cm,yshift=3 cm,start chain=going below,node distance=3mm]
						\foreach \i in {1}
						\node[on chain] (b\i)  {$B_1$};
					\end{scope}
					
					\begin{scope}[xshift=3.75cm,yshift=0cm,start chain=going below,node distance=3mm]
						\foreach \i in {2}
						\node[on chain] (b\i)  {$B_2$};
					\end{scope}
					
					\draw [blue] (a1) -- (b1);
					\draw [red] (a1) -- (b2);
					\draw [green] (a2) -- (b1);
					\draw [orange] (a2) -- (b2);
					
				\end{tikzpicture}
				\caption{A rainbow $k$-connected coloring of $K_{a,b}$ when $a,b\geq 2k$.}\label{figure K_{a,b} coloring}
			\end{center}
		\end{figure}
		
		We will now show that the pair $(K_{a, b}, c)$ is rainbow $k$-connected. Consider two arbitrary vertices, $u, v \in V(K_{a,b})$. Without loss of generality, assume that $u \in A_1$.
		
		\textbf{Case 1:} $v\in A_1$. Then the collection $\{(u,b_{1,j},a_{2,j},b_{2,j},v)\mid j\in [k]\}$ is a collection of $k$ pairwise internally disjoint rainbow paths from $u$ to $v$.
		
		\textbf{Case 2:} $v\in A_2$. Then the collection $\{(u,b_{1,j},v)\mid j\in [k]\}$ gives us a collection of $k$ pairwise internally disjoint rainbow paths from $u$ to $v$.
		
		\textbf{Case 3:} $v\in B$. We will assume without loss of generality that $v\in B_1$. Then $\{(u,b_{2,j},a_{2,j},v)\mid j\in [k]\}$ is a collection of $k$ pairwise internally disjoint rainbow paths from $u$ to $v$.
		
		Thus, $(K_{a,b},c)$ is rainbow $k$-connected.
	\end{proof}
	
	We now present a coloring that will be helpful for providing an upper bound on $f(k,t)$ when $t\geq 3$.
	
	\begin{construction}\label{construction c_{t,K}}
		Let $t,a_1,a_2,\dots,a_t\in\mathbb{N}$ with $t\geq 2$, and let $K:=K_{a_1,a_2,\dots,a_t}$.
		
		If $t$ is even, arbitrarily label the partite sets of $K$ by $A_1,A_2,\dots,A_{\frac{t}2},B_1,B_2,\dots,B_{\frac{t}2}$, and let $A=\bigcup_{i=1}^{\frac{t}2}A_i$ and $B=\bigcup_{i=1}^{\frac{t}2}B_i$. Then let $c_{t,K}:E(K)\to [3]$ be defined as follows.
		\[
		c_{t,K}(uv) = 
		\begin{cases}
			1, & \text{if } u,v\in A \text{ or }u,v \in B,\\
			2, &\text{if }u\in A_i\text{ and }v\in B_i\text{ for } 1\leq i\leq\frac{t}{2},\\
			3, & \text{if } u \in A_i \text{ and } v \in B_j \text { for }1 \leq i, j \leq \frac{t}{2}, i\neq j. \\
		\end{cases}
		\]
		
		If $t$ is odd, arbitrarily label the partite sets of $K$ by $X,A_1,A_2,\dots,A_{\frac{t-1}2},B_1,B_2,\dots,B_{\frac{t-1}2}$, and let $K'=K[V(K)\setminus X]$. Define $A=\bigcup_{i=1}^{\frac{t-1}{2}}A_i$ and $B=\bigcup_{i=1}^{\frac{t-1}{2}}B_i$. Then let $c_{t,K}:E(K)\to [3]$ be defined as follows.
		\[
		c_{t,K}(uv) = 
		\begin{cases}
			c_{t-1,K'}(uv) & \text{if } u,v\in V(K'),\\
			1, &\text{if }u\in A\text{ and }v\in X,\\
			3, & \text{if } u\in B \text{ and }v \in X.
		\end{cases}
		\]
		For an example of $c_{9,K}$, see Figure~\ref{figure c_{9,K}}.
	\end{construction}

	\begin{figure}
		\begin{center}
			\begin{tikzpicture}[ultra thick,
				every node/.style={draw,circle},
				every fit/.style={ellipse,draw,inner sep=-2pt,text width=2cm}]

				\begin{scope}[xshift=.25cm,yshift=1.5cm,start chain=going below,node distance=3mm]
					\foreach \i in {1}
					\node[on chain] (a\i)  {$A_1$};
				\end{scope}
				
				\begin{scope}[xshift=.25cm,yshift=0cm,start chain=going below,node distance=3mm]
					\foreach \i in {2}
					\node[on chain] (a\i)  {$A_2$};
				\end{scope}
				
				\begin{scope}[xshift=.25cm,yshift=-1.5cm,start chain=going below,node distance=3mm]
					\foreach \i in {3}
					\node[on chain] (a\i)  {$A_3$};
				\end{scope}
				
				\begin{scope}[xshift=.25cm,yshift=-3cm,start chain=going below,node distance=3mm]
					\foreach \i in {4}
					\node[on chain] (a\i)  {$A_4$};
				\end{scope}
				
				\begin{scope}[xshift=3.75cm,yshift=1.5 cm,start chain=going below,node distance=3mm]
					\foreach \i in {1}
					\node[on chain] (b\i)  {$B_1$};
				\end{scope}
				
				\begin{scope}[xshift=3.75cm,yshift=0cm,start chain=going below,node distance=3mm]
					\foreach \i in {2}
					\node[on chain] (b\i)  {$B_2$};
				\end{scope}
				
				\begin{scope}[xshift=3.75cm,yshift=-1.5cm,start chain=going below,node distance=3mm]
					\foreach \i in {3}
					\node[on chain] (b\i)  {$B_3$};
				\end{scope}
				
				\begin{scope}[xshift=3.75cm,yshift=-3cm,start chain=going below,node distance=3mm]
					\foreach \i in {4}
					\node[on chain] (b\i) {$B_{4}$};
				\end{scope}
				
				\begin{scope}[xshift=2cm,yshift=-5.5cm,start chain=going below,node distance=3mm]
					\foreach \i in {1}
					\node[on chain] (x\i) {$X$};
				\end{scope}
				
				\draw [red] (a1) -- (b1);
				\draw [green] (a1) -- (b2);
				\draw [green] (a1) -- (b3);
				\draw [green] (a1) -- (b4);
				
				\draw [green] (a2) -- (b1);
				\draw [red] (a2) -- (b2);
				\draw [green] (a2) -- (b3);
				\draw [green] (a2) -- (b4);
				
				\draw [green] (a3) -- (b1);
				\draw [green] (a3) -- (b2);
				\draw [red] (a3) -- (b3);
				\draw [green] (a3) -- (b4);
				
				\draw [green] (a4) -- (b1);
				\draw [green] (a4) -- (b2);
				\draw [green] (a4) -- (b3);
				\draw [red] (a4) -- (b4);
				
				\draw [blue]   (a1) to[out=-180,in=-180] (a4);
				\draw [blue]   (a1) to[out=-180,in=-180] (a2);
				\draw [blue]   (a1) to[out=-180,in=-180] (a3);
				\draw [blue]   (a2) to[out=-180,in=-180] (a3);
				\draw [blue]   (a2) to[out=-180,in=-180] (a4);
				\draw [blue]   (a3) to[out=-180,in=-180] (a4);
				
				\draw [blue]   (b1) to[out=0,in=0] (b4);
				\draw [blue]   (b1) to[out=0,in=0] (b2);
				\draw [blue]   (b1) to[out=0,in=0] (b3);
				\draw [blue]   (b2) to[out=0,in=0] (b4);
				\draw [blue]   (b2) to[out=0,in=0] (b3);
				\draw [blue]   (b3) to[out=0,in=0] (b4);
				
				\node[ dashed, fit= (a1) (a2) (a3) (a4), inner sep=-3 mm, thick] (allA) {};
				\node[ dashed, fit= (b1) (b2) (b3) (b4), inner sep=-3 mm, thick] (allB) {};
				
				\draw [blue]   (x1) to[out=180,in=-90] (allA);
				\draw [green]   (x1) to[out=0,in=-90] (allB);
				
			\end{tikzpicture}
			\caption{A $9$-partite graph $K$ with edges colored according to $c_{9,K}$}
			\label{figure c_{9,K}}
		\end{center}
	\end{figure}
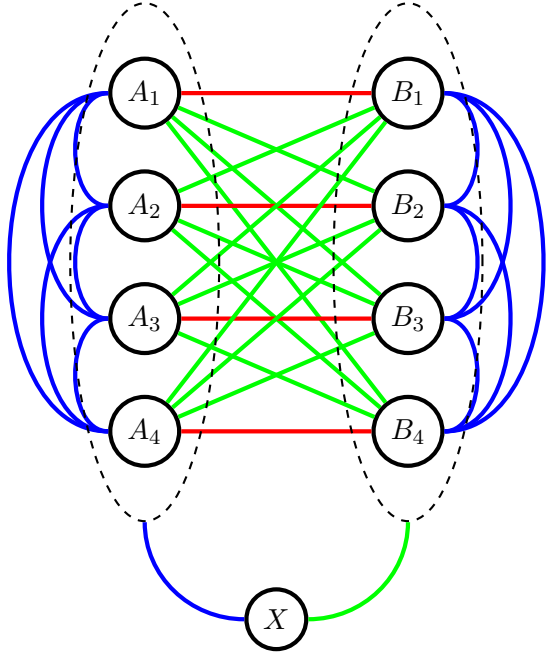
	
	We are now ready to give an upper bound when $t\ge 3$. We split the proof into two cases. Namely, when $t\ge 3$ is odd and when $t\ge 4$ is even.
	
	\begin{lemma}
		Let $k,t,a_1,a_2,\dots,a_t\in\mathbb{N}$ be such that $t\geq 3$ is odd and $\frac{2k}{t-1}\leq a_1\leq a_2\leq \dots\leq a_t$. Then
		\[
		\rc_k(K_{a_1,a_2,\dots,a_t})\leq 3.
		\]
	\end{lemma}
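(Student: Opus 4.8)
The plan is to use the coloring $c_{t,K}$ from Construction~\ref{construction c_{t,K}} and verify that it makes $K := K_{a_1,\dots,a_t}$ rainbow $k$-connected. Since $t$ is odd, write $t = 2s+1$ where $s = \frac{t-1}{2}$, and recall that the partite sets are labeled $X, A_1,\dots,A_s, B_1,\dots,B_s$ with $A = \bigcup A_i$, $B = \bigcup B_i$. The key quantitative input is that every partite set has size at least $\frac{2k}{t-1} = \frac{k}{s}$, so $|A| \ge k$, $|B| \ge k$, and (crucially) within $A$ we can find, for each index $i$, a set of roughly $\frac{k}{s}$ distinguished vertices in $A_i$, giving us in total at least $k$ distinguished vertices spread across the $A_i$'s so that no two lie in the same part; likewise for $B$. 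This "spreading" is what lets us route $k$ internally disjoint paths while keeping color $3$ (the cross-index color) available.

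First I would record the available rainbow path types under $c_{t,K}$. Recall colors: $1$ on edges inside $A$ or inside $B$; $2$ on $A_i$–$B_i$ edges; $3$ on $A_i$–$B_j$ edges with $i \ne j$; and for the $X$ vertices, $1$ on $X$–$A$ edges and $3$ on $X$–$B$ edges. A path is rainbow iff it uses each of $1,2,3$ at most once. The workhorse paths will be: (a) $a \to b$ where $a \in A_i$, $b \in B_j$, $i\ne j$ — a single edge of color $3$; (b) $a \to a'$ with $a,a' \in A$, routed as $a - b - a'$ where $b \in B$ lies in an index different from both those of $a$ and $a'$ if possible (colors $3,3$ — not rainbow), so instead route $a - b - a'$ with $b \in B_i$ where $A_i \ni a$ (colors $2, 3$) — need $a'$ not in $A_i$; the general internally-disjoint family is built by choosing $k$ distinct such $b$'s from the spread-out distinguished set in $B$; (c) paths through $X$: $a - x$ has color $1$, $x - b$ has color $3$, so $a - x - b$ is rainbow, and longer paths like $b - x - a - b'$ use colors $3,1,3$ — not rainbow, so paths through $X$ are length-$2$ and there is essentially one such vertex-disjoint contribution per $x \in X$.

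Then I would do the case analysis on the location of the two vertices $u,v$ to be connected, in each case exhibiting $k$ pairwise internally disjoint rainbow paths: (i) $u,v$ in the same part $A_i$ (symmetrically $B_i$): use paths $u - b - v$ for $b$ ranging over $k$ distinguished vertices of $B$ chosen outside a single index only when needed — actually use $b \in B_i$ giving colors $2,2$, bad, so route $u - a'' - b - v$ type length-$3$ paths or use the $X$ vertices; I expect to combine $|X| = a_1 \ge \frac{2k}{t-1}$ length-$2$ paths $u - x - ?$ won't reach $v \in A_i$, so more care is needed here; (ii) $u \in A_i$, $v \in A_j$, $i \ne j$: paths $u - b - v$ with $b \in B_i$ (colors $2,3$) for the distinguished vertices of $B_i$, plus $b \in B_j$ (colors $3,2$), plus $b$ in other indices (colors $3,3$ — not allowed), plus $x$-paths of length $3$; (iii) $u \in A$, $v \in B$: use color-$3$ single edges when indices differ, color-$2$ single edges or length-$3$ detours when indices agree, plus $X$-paths; (iv) $u = x \in X$: combine length-$2$ paths $x - a - ?$.

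The main obstacle I anticipate is the same-part case (i) together with making the counts work when $s$ is large: a length-$2$ path between two vertices of $A_i$ must pass through some $b$, but $A_i$–$B$ edges only have colors $2$ (to $B_i$) and $3$ (to $B_j$, $j\ne i$), so $u - b - v$ has color pattern $\{2 \text{ or } 3\} \times \{2 \text{ or } 3\}$, which is rainbow only when one of the two edges has color $2$ and the other color $3$ — impossible since both endpoints are in $A_i$ forcing a symmetric pattern. Hence we genuinely need length-$3$ paths $u - b - a' - b'$ or length-$3$ paths through $X$, and we must fit $k$ of them internally disjointly using only $\sim \frac{k}{s}$ distinguished vertices per part and $|X| \ge \frac{2k}{t-1} = \frac{k}{s} \cdot \frac{2}{2}$ — here is where the precise bound $a_1 \ge \frac{2k}{t-1}$ gets used tightly, so the bookkeeping of how many internally disjoint paths each construction contributes, summed to exactly $k$, is the delicate part. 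I would handle it by partitioning an index set of size $k$ among the $s$ indices of $B$ and routing one family of paths per index, reserving the vertices of $X$ and of the "other" parts as the internal vertices, and checking disjointness by construction.
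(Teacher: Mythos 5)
You are using the right coloring (the paper's proof also applies Construction~\ref{construction c_{t,K}} directly and picks $s=\lceil 2k/(t-1)\rceil$ distinguished vertices per part), but your inventory of rainbow paths is wrong at exactly the point you flag as the ``main obstacle,'' and as written the argument cannot be completed. You claim that rainbow paths through $X$ have length $2$ because ``$b-x-a-b'$ uses colors $3,1,3$.'' You only checked the pattern that starts and ends in $B$; the pattern $a\,\text{--}\,x\,\text{--}\,b\,\text{--}\,a'$ with $a\in A$, $b\in B_i$, $a'\in A_i$ has colors $1,3,2$ and \emph{is} rainbow. These are precisely the paths the proof needs for the same-part case: for $u,v\in A_1$ the available families are $(u,a_{i,j},b_{i,j},v)$ for $i\ne 1$ (colors $1,2,3$), giving only $(\tfrac{t-1}{2}-1)s$ paths, and the remaining $s$ paths must be $(u,x_j,b_{1,j},v)$ (colors $1,3,2$). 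For $t=3$ the first family is empty, so \emph{all} $k$ paths in the same-part case go through $X$; having ruled these out, you are left with no construction at all, which is why your case (i) ends with ``more care is needed here.''

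A second, smaller counting gap: in your case (ii) ($u\in A_i$, $v\in A_j$, $i\ne j$) you list only the $2s$ length-$2$ paths through $B_i\cup B_j$ plus $X$-paths, i.e.\ at most about $3s\approx 6k/(t-1)$ paths, which falls below $k$ once $t\ge 9$; you also need the families $(u,a_{\ell,m},b_{\ell,m},v)$ for $\ell\notin\{i,j\}$ (colors $1,2,3$). Similarly, your case (iv) for $u,v\in X$ proposes length-$2$ paths $x\,\text{--}\,a\,\text{--}\,?$, but any length-$2$ path between two vertices of $X$ has colors $1,1$; the correct paths here are again $(u,a_{i,j},b_{i,j},v)$ with colors $1,2,3$. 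Once the full list of rainbow path shapes is in hand (single edges, length-$2$ paths with color pattern $\{2,3\}$, and length-$3$ paths with patterns $1,2,3$ and $1,3,2$), the bookkeeping is routine --- each case yields exactly $\tfrac{t-1}{2}s\ge k$ internally disjoint paths --- so the delicacy you anticipate lies not in the counting but in getting the path inventory right.
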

	
	\begin{proof}
		Let $s:=\left\lceil\frac{2k}{t-1}\right\rceil$ and let $K:=K_{a_1,a_2,\dots,a_t}$. Label the partite sets of $K$ and color the edges of $K$ according to $c:=c_{t,K}$, described in Construction~\ref{construction c_{t,K}}. For each $1\leq i\leq \frac{t-1}{2}$, let $\{a_{i,1},a_{i,2},\dots,a_{i,s}\}\subseteq A_i$ and  $\{b_{i,1},b_{i,2},\dots,b_{i,s}\}\subseteq B_i$ be collections of $s$ vertices. Similarly, let $\{x_1,x_2,\dots,x_s\}\subseteq X$ be a collection of $s$ vertices.
		
		We now show that $(K,c)$ is rainbow $k$-connected. Consider a pair, $u,v\in V(K)$.
		
		\textbf{Case 1:} At least one of $u$ or $v$ is in $A\cup B$. Assume without loss of generality that $u\in A_1$.
		
		\textbf{Case 1.1:} $v\in A_1$. Then 
		\[
		\left\{(u,a_{i,j},b_{i,j},v)\mid 2\leq i\leq \frac{t-1}{2}, 1\leq j\leq s\right\}\cup \{(u,x_j,b_{1,j},v)\mid 1\leq j\leq s\}
		\]
		forms a collection of $\frac{t-1}{2}s\geq k$ pairwise internally disjoint rainbow paths from $u$ to $v$.
		
		\textbf{Case 1.2:} $v\in A\setminus A_1$, assume without loss of generality that $v\in A_2$. Then 
		\[
		\left\{(u,a_{i,j},b_{i,j},v)\mid 3\leq i\leq \frac{t-1}{2}, 1\leq j\leq s\right\}\cup \{(u,x_j,b_{2,j},v)\mid 1\leq j\leq s\}\cup \{(u,b_{1,j},v)\mid 1\leq j\leq s\}
		\]
		give us $\frac{t-1}{2}s\geq k$ pairwise internally disjoint rainbow $u,v$-paths.
		
		\textbf{Case 1.3:} $v\in B$. Then 
		\[
		\left\{(u,a_{i,j},v)\mid 2\leq i\leq \frac{t-1}{2},1\leq j\leq s\right\}\cup \{(u,x_j,v)\mid 1\leq j\leq s\}
		\]
		contains $\frac{t-1}{2}s\geq k$ pairwise internally disjoint rainbow $u,v$-paths.
		
		\textbf{Case 1.4:} $v\in X$. Then
		\[
		\left\{(u,a_{i,j},b_{i,j},v)\mid 2\leq i\leq \frac{t-1}{2}, 1\leq j\leq s\right\}\cup \{(u,b_{1,j},v)\mid 1\leq j\leq s\}
		\]
		forms a collection of $\frac{t-1}{2}s\geq k$ pairwise internally disjoint rainbow $u,v$-paths.
		
		\textbf{Case 2:} Both $u,v\in X$. Then
		\[
		\left\{(u,a_{i,j},b_{i,j},v)\mid 1\leq i\leq \frac{t-1}{2}, 1\leq j\leq s\right\}
		\]
		give us a collection of $\frac{t-1}{2}s\geq k$ rainbow $u,v$-paths that are pairwise internally disjoint.
		
		In all cases, we find at least $k$ pairwise internally disjoint paths between any two vertices, so $\rc_k(K)\leq 3$.
	\end{proof}
	
	\begin{lemma}
		Let $k,t,a_1,a_2,\dots,a_t\in\mathbb{N}$ be such that $t\geq 4$ is even and $\frac{2k}{t-1}\leq a_1\leq a_2\leq \dots\leq a_t$. Then
		\[
		\rc_k(K_{a_1,a_2,\dots,a_t})\leq 3.
		\]
	\end{lemma}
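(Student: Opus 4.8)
The plan is to colour $K := K_{a_1,\dots,a_t}$ by $c := c_{t,K}$ from Construction~\ref{construction c_{t,K}} (the even-$t$ case), set $s := \lceil 2k/(t-1)\rceil$, and record two facts used throughout: every part of $K$ contains at least $s$ vertices (since $a_1\ge 2k/(t-1)$ and $a_1$ is an integer), and $(t-1)s \ge 2k$. Fix distinguished vertices $a_{i,1},\dots,a_{i,s}\in A_i$ and $b_{i,1},\dots,b_{i,s}\in B_i$ for each $i$. Because simultaneously swapping $A_i\leftrightarrow B_i$ for all $i$, and permuting the index set $\{1,\dots,t/2\}$, are colour-preserving automorphisms of $(K,c)$, it suffices to produce $k$ internally disjoint rainbow $u,v$-paths whenever $u\in A_1$; I would then split into cases according to where $v$ lies: (i) $v\in A_j$ with $j\ne1$; (ii) $v\in B_1$; (iii) $v\in B_j$ with $j\ne1$; (iv) $v\in A_1$. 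In each case, the ``rainbow'' checks amount to reading the three edge-colours of a short path off the definition of $c_{t,K}$, and the ``enough paths'' checks come down to $(t-1)s\ge 2k$.

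In cases (i)--(iii) short rainbow paths are plentiful. For (i), say $v\in A_2$: the length-two paths $(u,b_{1,\ell},v)$ and $(u,b_{2,\ell},v)$ for $\ell\in[s]$, together with the length-three paths $(u,a_{i,\ell},b_{i,\ell},v)$ for $3\le i\le t/2$ and $\ell\in[s]$, are internally disjoint and rainbow, and there are $2s+(\tfrac t2-2)s=\tfrac t2 s\ge k$ of them. For (ii) and (iii) one checks that through every part $P$ other than $A_1$ and the part containing $v$ there is a rainbow $u,v$-path of length two, so there are at least $(t-2)s$ internally disjoint rainbow $u,v$-paths, and $(t-2)s\ge (t-2)\cdot\tfrac{2k}{t-1}\ge k$ since $t\ge3$. (The case $t=4$ causes no trouble: the families are smaller, with empty ones contributing $0$, but the same inequalities hold.)

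The real obstacle is case (iv), $u,v\in A_1$. A rainbow path uses at most three edges; a two-edge $u,v$-path has either both edges inside $A$ or both edges between $A$ and a single $B_j$, hence is not rainbow, and every rainbow three-edge $u,v$-path has both internal vertices among the $t-1$ parts other than $A_1$. The naive choice of the ``diagonal'' paths $(u,a_{i,\ell},b_{i,\ell},v)$ for $2\le i\le t/2$ uses up all of $A_i,B_i$ with $i\ge2$ but never touches $B_1$, stalling at $(\tfrac t2-1)s$ paths, which can be strictly less than $k$ (for instance whenever $2k/(t-1)$ is an integer). The fix is to exploit three further families of rainbow paths available precisely because $u,v\in A_1$, namely $(u,a_{j,\ell},b_{1,\ell'},v)$, $(u,b_{1,\ell'},a_{j,\ell},v)$, and $(u,b_{1,\ell'},b_{j,\ell},v)$ for $j\ne1$, each routing through the otherwise-idle part $B_1$. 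Concretely I would keep the diagonal paths only for $3\le i\le t/2$ ($(\tfrac t2-2)s$ of them) and then pack the three parts $A_2,B_2,B_1$ — each carrying at least $s$ distinguished vertices — using the three admissible ``pair types'' $\{A_2,B_2\}$, $\{A_2,B_1\}$, $\{B_2,B_1\}$; this is a small vertex-disjoint packing whose optimum is $\lfloor 3s/2\rfloor$. This yields $(\tfrac t2-2)s+\lfloor 3s/2\rfloor=(\tfrac t2-1)s+\lfloor s/2\rfloor$ internally disjoint rainbow $u,v$-paths, an integer quantity which, using that $(t-2)s$ is even together with $(t-1)s\ge2k$ and a parity comparison between $(t-1)s$ and $2k$, is at least $k$; when $t=4$ the diagonal family is empty and the count is just $\lfloor 3s/2\rfloor\ge k$. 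Assembling the four cases gives $\rc_k(K)\le 3$, and the only step I expect to require care is (iv); everything else is colour-bookkeeping plus the inequality $(t-1)s\ge2k$.
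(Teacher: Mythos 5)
Your proposal is correct and follows essentially the same route as the paper: the same colouring $c_{t,K}$, the same short rainbow paths for the easy cases, and for the hard case $u,v\in A_1$ the same idea of supplementing the diagonal paths through $A_i\cup B_i$ ($i\ge 3$) with a packing of $A_2$, $B_2$, $B_1$ by the three admissible pair types, yielding $(\tfrac t2-1)s+\lfloor s/2\rfloor$ paths. The only cosmetic difference is the final counting: you use a parity argument on $(t-1)s$ versus $2k$, while the paper counts internal vertices and rounds $k-\tfrac12$ up to $k$; both give the same bound.
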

	
	\begin{proof}
		Let $s:=\left\lceil\frac{2k}{t-1}\right\rceil$ and let $K:=K_{a_1,a_2,\dots,a_t}$. Label the partite sets of $K$ and color the edges of $K$ according to $c:=c_{t,K}$, described in Construction~\ref{construction c_{t,K}}. For each $1\leq i\leq \frac{t-1}{2}$, let $\{a_{i,1},a_{i,2},\dots,a_{i,s}\}\subseteq A_i$ and  $\{b_{i,1},b_{i,2},\dots,b_{i,s}\}\subseteq B_i$ be collections of $s$ vertices. Let $s_1:=\left\lceil \frac{s}2\right\rceil$ and $s_2:=\left\lfloor \frac{s}2\right\rfloor$, and note that $s_1+s_2=s$.
		
		We want to show that there exist at least $k$ pairwise internally disjoint rainbow paths between any two vertices in $K$, Let $u,v\in V(K)$, and assume without loss of generality that $u\in A_1$.
		
		\textbf{Case 1:} $v\in A_1$. Then
		\[
		\{(u,a_{2,j},b_{2,j},v)\mid 1\leq j\leq s_1\}\cup \{(u,a_{2,s_1+j},b_{1,s_1+j},v)\mid 1\leq j\leq s_2\}\cup\{(u,b_{1,j},b_{2,s_1+j}, v)\mid 1\leq j\leq s_2\}
		\]
		give us a collection of pairwise internally disjoint rainbow paths from $u$ to $v$, all of which only use vertices in $A_2\cup B_1\cup B_2\cup \{u,v\}$, see Figure~\ref{fig::evenupperboundcase1} for an example of such paths when $s=2$ and $t=4$. We also will add the paths in
		\[
		\left\{(u,a_{i,j},b_{i,j},v)\mid 3\leq i\leq \frac{t}2,1\leq j\leq s\right\}.
		\]
		\begin{figure}[!ht]
			\begin{center}
				\begin{tikzpicture}[thick,
					every node/.style={draw,ellipse},
					every fit/.style={ellipse,draw,inner sep=-2pt}]
					
					\begin{scope}[xshift=1cm,yshift=0cm, start chain=going below,node distance=7mm]
						\foreach \i in {u, v}
						\node[transform shape, fill=black, circle, draw, scale = 0.5,on chain] (a\i) [label=above: $\i$] {};
					\end{scope}
					
					\begin{scope}[xshift=1cm,yshift=-4.5cm, start chain=going below,node distance=7mm]
						\foreach \i in {1, 2}
						\node[transform shape, fill=black, circle, draw, scale = 0.5,on chain] (a\i) [label=above: $a_{2,\i}$] {};
					\end{scope}
					
					\begin{scope}[xshift=6cm,yshift=0cm, start chain=going below,node distance=7mm]
						\foreach \i in {1, 2}
						\node[transform shape, fill=black, circle, draw, scale = 0.5,on chain] (b\i) [label=above: $b_{1,\i}$] {};
					\end{scope}
					
					\begin{scope}[xshift=6cm,yshift=-4.5cm, start chain=going below,node distance=7mm]
						\foreach \i in {1, 2}
						\node[transform shape, fill=black, circle, draw, scale = 0.5,on chain] (b2\i) [label=above: $b_{2,\i}$] {};
					\end{scope}
					
					\draw (1,-0.5) ellipse (.6cm and 1.45cm);
					
					\draw (6,-0.5) ellipse (.6cm and 1.45cm);
					
					\draw (1,-5) ellipse (.6cm and 1.45cm);
					
					\draw (6,-5) ellipse (.6cm and 1.45cm);
					
					\draw[red, ultra thick] (au) -- (b1);
					\draw[green, ultra thick] (a1) -- (b1);
					\draw [blue, ultra thick]   (av) to[out=-180,in=-180] (a1);
					
					\draw[red, ultra thick] (au) -- (b2);
					\draw[green, ultra thick] (av) -- (b21);
					\draw [blue, ultra thick]   (b2) to[out=0,in=0] (b21);
					
					\draw[green, ultra thick] (au) -- (b22);
					\draw[red, ultra thick] (a2) -- (b22);
					\draw [blue, ultra thick]   (av) to[out=-180,in=-180] (a2);
					
					\node[draw=none] at (-0,0.5) {$A_1$};
					\node[draw=none] at (-0,-6) {$A_2$};
					\node[draw=none] at (7,0.5) {$B_1$};
					\node[draw=none] at (7,-6) {$B_2$};
					
				\end{tikzpicture}
			\end{center}
			\caption{Three pairwise internally disjoint rainbow $u,v$-paths on a 4-partite graph $K$ colored according to $c_{4,K}$.}
			\label{fig::evenupperboundcase1}
		\end{figure}
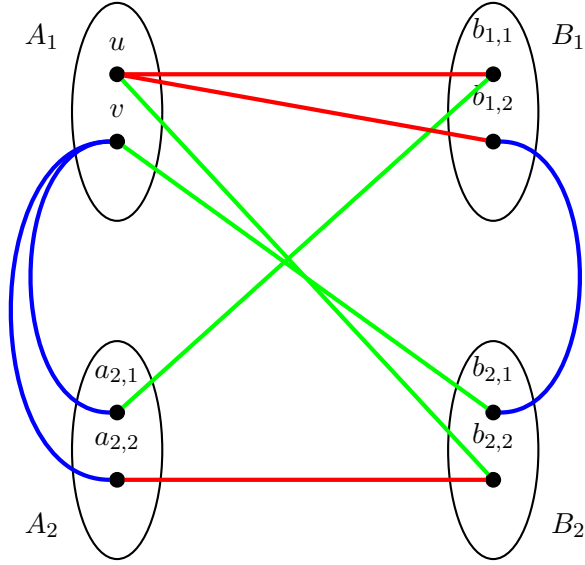
		
		We claim that the total number of paths counted above is at least $k$. Indeed, if $s$ is even (so $s_1=s/2$), then the collection of paths contains every vertex in 
		\begin{equation}\label{expression set of vertices in paths}
			\left\{a_{i,j}\mid 2\leq i\leq t/2,1\leq j\leq s\right\}\cup \{b_{i,j}\mid 1\leq i\leq t/2,1\leq j\leq s\}
		\end{equation}
		as an internal vertex, and if $s$ is odd, then the collection of paths contains every vertex in~\eqref{expression set of vertices in paths} except $b_{1,s_1}$, so in all cases we have at least $(t-1)s-1$ vertices internal to the paths, and each path is of length $3$, so it contains exactly two internal vertices, so the number of paths is at least
		\[
		\frac{(t-1)s-1}2=\frac{t-1}2\left\lceil\frac{2k}{t-1}\right\rceil-\frac{1}2\geq k-\frac{1}{2}.
		\]
		Now, since the number of paths and $k$ are both integers, the above inequality implies that there are at least $k$ paths.
		
		\textbf{Case 2:} $v\in A\setminus A_1$, assume without loss of generality that $v\in A_2$. Then
		\[
		\left\{(u,a_{i,j},b_{i,j},v)\mid 3\leq i\leq \frac{t}2, 1\leq j\leq s\right\}\cup\{(u,b_{i,j},v)\mid 1\leq i\leq 2, 1\leq j\leq s\}
		\]
		contains $\frac{t}2s\geq \frac{t}2\cdot \frac{2k}{t-1}>k$ pairwise internally disjoint rainbow $u,v$-paths.
		
		\textbf{Case 3:} $v\in B$. Let $i^*\in [t/2]$ be such that $v\not\in B_{i^*}$. There is such $i^*$ since $t \ge 4$. Then
		\[
		\{(u,a_{i,j},v)\mid 2\leq i\leq \frac{t}2,1\leq j\leq s\}\cup \{(u,b_{i^*,j},v)\mid 1\leq j\leq s\}
		\]
		contains $\frac{t}2s\geq k$ pairwise internally disjoint rainbow paths from $u$ to $v$.
		
		Thus, $(K,c)$ is rainbow $k$-connected, and so $\rc_k(K)\leq 3$.
	\end{proof}
	
	\section{Lower bound on \texorpdfstring{$f(k,t)$}{2}}
	
	In this section, we show that $f(k,t) \ge \lceil\frac{2k}{t-1}\rceil$ for every $k,t \ge 2$. As before, we split into cases. We first deal with bipartite graphs in the following lemma.
	
	\begin{lemma}
		Let $k,s,m\in\mathbb{N}$ with $k\leq s\leq 2k-1$ and $m\geq 4^{s}+1$. Then,
		\[
		\rc_k(K_{s,m})\geq 5.
		\]
	\end{lemma}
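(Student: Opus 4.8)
The plan is to argue by contradiction: suppose there is an edge coloring $c\colon E(K_{s,m})\to[4]$ for which $(K_{s,m},c)$ is rainbow $k$-connected, and let $S,M$ be the partite sets with $|S|=s$ and $|M|=m$. The first step is a structural restriction on rainbow paths between two vertices of the \emph{large} side $M$. Since $K_{s,m}$ is bipartite, any $u$--$v$ path with $u,v\in M$ has even length; and since a rainbow path of length $\ell$ uses $\ell$ distinct colors, with only $4$ colors available every rainbow $u$--$v$ path has length exactly $2$ or exactly $4$. A length-$2$ rainbow path $u\,w\,v$ has a single internal vertex $w\in S$ (and is rainbow precisely when $c(uw)\neq c(wv)$), while a length-$4$ rainbow path $u\,w_1\,m\,w_2\,v$ has internal vertices $w_1,w_2\in S$ and $m\in M$, hence consumes two vertices of $S$.

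Next I would do the counting. Fix distinct $u,v\in M$ and take $k$ pairwise internally disjoint rainbow $u$--$v$ paths; say $p$ of them have length $2$ and $q$ have length $4$, so $p+q=k$. Internal disjointness forces the multiset of $S$-vertices used as internal vertices to be a set, so $p+2q\le|S|=s\le 2k-1$. Subtracting $p+q=k$ gives $q\le k-1$, hence $p\ge 1$. Thus \textbf{for every pair of distinct vertices $u,v\in M$ there is a vertex $w\in S$ with $c(uw)\neq c(wv)$.} This is exactly the place where the hypothesis $s\le 2k-1$ is used in an essential way (and it must be, since the earlier lemma shows $\rc_k(K_{s,m})\le 4$ once $s\ge 2k$); the hypothesis $s\ge k$ is needed only to guarantee that $K_{s,m}$ is $k$-connected so that the $k$ internally disjoint paths exist at all.

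Finally I would invoke pigeonhole. For each $u\in M$ record its color profile to $S$, namely the function $\varphi_u\colon S\to[4]$ with $\varphi_u(w)=c(uw)$. There are only $4^{s}$ such functions while $|M|=m\ge 4^{s}+1$, so there exist distinct $u,v\in M$ with $\varphi_u=\varphi_v$, i.e.\ $c(uw)=c(vw)=c(wv)$ for every $w\in S$. For this pair no path $u\,w\,v$ is rainbow, contradicting the boxed conclusion of the previous paragraph. Hence no $4$-coloring makes $K_{s,m}$ rainbow $k$-connected, so $\rc_k(K_{s,m})\ge 5$.

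I do not expect any step to be a genuine obstacle; the only thing requiring care is the bookkeeping of internal vertices on the short rainbow paths in the counting step, together with the observation that $4$ colors caps rainbow path lengths at $4$ and parity then rules out length $3$.
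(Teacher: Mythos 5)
Your proposal is correct and uses essentially the same argument as the paper: pigeonhole on the $4^{s}$ possible color profiles from a vertex of the large side to the small side, plus the observation that bipartiteness and the $4$-color cap force rainbow paths between two large-side vertices to have length $2$ or $4$, and then a count of internal vertices in the small side against $s\le 2k-1$. The only difference is cosmetic ordering (you derive the "some length-$2$ rainbow path must exist" condition first and apply pigeonhole last, while the paper finds the twin pair first), so this is the same proof.
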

	
	\begin{proof}
		Let $K:=K_{s,m}$ and let $A$ and $B$ be the partite sets of $K$ with $|A|=m$. Assume to the contrary that $c:E(K)\to [4]$ is an edge coloring such that $(K,c)$ is rainbow $k$-connected. Since $c$ only uses four colors and $|A|>4^{|B|}$, by the pigeonhole principle there must exist two vertices $a_1,a_2\in A$ such that $c(a_1b)=c(a_2b)$ for all $b\in B$. This, along with the fact that $K$ is bipartite implies that every rainbow path from $a_1$ to $a_2$ must be of length $4$. Each such path contains two vertices in $B$, but $|B|<2k$, so $K$ does not contain $k$ pairwise internally disjoint paths of length $4$ from $a_1$ to $a_2$, a contradiction.
	\end{proof}
	
	We now generalize the previous argument for every $t\ge 3$ in the following.
	
	\begin{lemma}
		Let $k,t,s_1, s_2, \ldots, s_{t-1},m\in\mathbb{N}$ with $t\geq 3$, $\frac{k}{t-1}\leq s_i\leq \left\lceil\frac{2k}{t-1}\right\rceil-1$ for every $i \in [t-1]$ and $m\geq 3^{s_1+\dots+s_{t-1}}+1$. Let $K$ denote the complete $t$-partite graph with parts of size $m, s_1, s_2, \ldots, s_{t-1}$. Then,
		\[
		\rc_k(K)\geq 4.
		\]
	\end{lemma}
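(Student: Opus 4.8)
The plan is to mimic the argument of the preceding lemma (the bipartite case), with the set $B$ now playing the role of the union of the $t-1$ small parts rather than that of a single part. Write $s := s_1 + \dots + s_{t-1}$, let $A$ be the part of $K$ of size $m$, and let $B := V(K) \setminus A$, so that $|B| = s$ and every neighbour of a vertex of $A$ lies in $B$ (since $A$ is independent and $K$ is complete multipartite). First I would record the arithmetic fact that $s \le 2k-1$: from $s_i \le \lceil \tfrac{2k}{t-1}\rceil - 1 < \tfrac{2k}{t-1}$ one gets $s < (t-1)\cdot\tfrac{2k}{t-1} = 2k$, and $s$ is an integer.

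I would then suppose for contradiction that $c\colon E(K)\to [3]$ is an edge colouring making $(K,c)$ rainbow $k$-connected, and associate to each $a\in A$ the vector $(c(ab))_{b\in B}\in[3]^B$. Since $|A| = m \ge 3^{s}+1 > 3^{|B|}$, the pigeonhole principle yields distinct vertices $a_1,a_2\in A$ with $c(a_1 b) = c(a_2 b)$ for every $b\in B$. Because $a_1$ and $a_2$ lie in the same part of $K$ they are non-adjacent, so every $a_1,a_2$-path has length at least $2$; moreover a path of length exactly $2$, say $(a_1,b,a_2)$, uses the edges $a_1 b$ and $a_2 b$, which receive the same colour, so it is not rainbow. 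Hence every rainbow $a_1,a_2$-path has length at least $3$.

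The key structural point I would use is that every $a_1,a_2$-path of length at least $3$ has at least two internal vertices lying in $B$: its second vertex is a neighbour of $a_1$ and its second-to-last vertex is a neighbour of $a_2$, so both lie in $B$, and they are distinct because the path has length at least $3$. (In contrast to the bipartite setting, length-$3$ paths are not ruled out here, but they still use two vertices of $B$, which is all that is needed.) Consequently the $k$ pairwise internally disjoint rainbow $a_1,a_2$-paths supplied by the hypothesis have pairwise disjoint internal-vertex sets, so together they use at least $2k$ distinct vertices of $B$, contradicting $|B| = s \le 2k-1$. Therefore no such colouring $c$ exists, i.e.\ $\rc_k(K) \ge 4$.

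I expect the argument to be essentially routine once one identifies $B$ (the union of the $t-1$ small parts) as the correct analogue of the part $B$ used in the bipartite proof. The closest thing to an obstacle is the observation in the previous paragraph: one must check that the length-$3$ $a_1,a_2$-paths that newly appear in the multipartite case still contribute two vertices of $B$ each, so that the count $|B|\ge 2k$ is still forced; granting that, no real difficulty remains.
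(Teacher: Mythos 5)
Your proposal is correct and follows essentially the same route as the paper: pigeonhole on the large part to find $a_1,a_2$ with identical colour vectors into $B=\bigcup_i B_i$, rule out rainbow paths of length $\le 2$, and then count two internal $B$-vertices per path against $|B|\le 2k-1$. The only (harmless) difference is that the paper uses the three-colour bound to pin every rainbow $a_1,a_2$-path to length exactly $3$, whereas you only need length at least $3$ to get the same count.
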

	
	\begin{proof}
		Let $A,B_1,\dots,B_{t-1}$ be the partite sets of $K$ with $|A|=m$, and $|B_i|=s_i$ for every $i \in [t-1]$. Let $B:=\bigcup_{i=1}^{t-1}B_i$. Assume to the contrary that there exists a coloring $c:E(K)\to [3]$ such that $(K,c)$ is rainbow $k$-connected. Since $c$ uses $3$ colors and $|A|>3^{|B|}$, by the pigeonhole principle there are two vertices $a_1,a_2\in A$ such that $c(a_1b)=c(a_2b)$ for all $b\in B$. Thus, every rainbow path from $a_1$ to $a_2$ must be of length $3$. Each path of length $3$ from $a_1$ to $a_2$ must have the internal vertex of the path in $B$. Now, $|B|=s_1+s_2+\ldots+s_{t-1}$, so there are at most $\frac{t-1}{2}\left(\left\lceil\frac{2k}{t-1}\right\rceil-1\right)$ pairwise internally disjoint paths of length $3$ from $a_1$ to $a_2$ in $K$. However,
		\[
		\frac{t-1}{2}\left(\left\lceil\frac{2k}{t-1}\right\rceil-1\right)< \frac{t-1}{2}\left(\frac{2k}{t-1}\right)=k,
		\]
		a contradiction.
	\end{proof}
	
	This concludes the proof that $f(k,t) = \lceil\frac{2k}{t-1}\rceil$ for every $k,t \ge 2$.
	
	\section{Complete multipartite graphs with very low rainbow \texorpdfstring{$k$}{2}-connec\-tion number}
	
	In this section, we find sufficient conditions for multipartite graphs to have rainbow $2$-connection number 2, the minimum possible value. We start with the following observation.
	
	\begin{theorem}
		Let $t,n_1,n_2,\dots,n_t\in\mathbb{N}$ with $t\geq 3$, and assume $\rc_2(K_{n_1, n_2,\dots, n_t}) = 2$. Then $\rc_2( K_{n_1+1,n_2+1,\dots,n_t} ) = 2$.
	\end{theorem}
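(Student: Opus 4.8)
The plan is to take a rainbow $2$-connected coloring $c$ of $K := K_{n_1,\dots,n_t}$ with colors $[2]$ and extend it to a coloring $c'$ of $K' := K_{n_1+1,n_2+1,n_3,\dots,n_t}$ by adding one new vertex $x$ to the first part and one new vertex $y$ to the second part, and then assigning colors to all the new edges incident to $x$ and to $y$. Write $P_1,\dots,P_t$ for the parts of $K$ (sizes $n_1,\dots,n_t$), so in $K'$ the new vertex $x$ is adjacent to everything except $P_1 \cup \{x\}$, the new vertex $y$ is adjacent to everything except $P_2 \cup \{y\}$, and $x,y$ are adjacent to each other. The guiding principle is that $x$ should ``imitate'' some existing vertex $p \in P_1$ and $y$ should imitate some existing vertex $q \in P_2$, so that rainbow paths already present in $(K,c)$ can be transported to paths involving $x$ or $y$.

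Concretely, I would fix $p \in P_1$ and $q \in P_2$ (note $p,q$ exist and $pq \in E(K)$ since $n_1,n_2 \ge 1$; if some $n_i = 0$ the statement is essentially about fewer parts, so assume all parts nonempty), and set $c'(xw) := c(pw)$ for every $w \notin P_1 \cup P_2 \cup \{x,y\}$, and $c'(yw) := c(qw)$ for every $w \notin P_1 \cup P_2 \cup \{x,y\}$. The remaining edges to be colored are $xy$, $xq$, $x$-to-$(P_2\setminus\{q\})$, $yp$, and $y$-to-$(P_1 \setminus \{p\})$; these I would assign by hand, and finding a consistent choice here is the crux. Then the verification splits into checking that every pair among $\{x, y\} \cup V(K)$ has two internally disjoint rainbow paths. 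For a pair $u,v \in V(K)$ neither equal to $x$ nor $y$, the two rainbow paths guaranteed in $(K,c)$ still work in $(K',c')$ verbatim. For a pair $x, v$ with $v \in V(K)$, I would lift the two rainbow $p$-$v$ paths from $(K,c)$: if such a path starts $p, w, \dots$ with $w \notin P_1 \cup P_2$, then $x, w, \dots$ is rainbow by construction; the trouble is paths through $P_2$ (whose new edges from $x$ are the hand-colored ones) and paths of length $1$ (i.e.\ $v \in N_K(p)$, where in $K'$ we may instead route $x$-$v$ through $y$ or through a third part). The symmetric argument handles $y,v$, and the pair $x,y$ itself needs two internally disjoint rainbow paths, e.g.\ the edge $xy$ plus a length-two path $x, z, y$ through some $z$ in a third part.

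The main obstacle I anticipate is the bookkeeping around $x$'s edges into $P_2$ and $y$'s edges into $P_1$ — exactly the edges not determined by the ``imitation'' rule — because a rainbow $p$-$v$ path in $K$ may pass through $q$ (or through $P_2$ generally) and then the lifted path $x, \dots$ uses one of these hand-colored edges, so we must choose their colors so that enough lifted paths remain rainbow and internally disjoint. I expect the clean way to handle this is to use $t \ge 3$ crucially: since there are at least three parts, for any pair of vertices we can find short rainbow detours through a third part that avoid the problematic edges entirely, so the hand-coloring only needs to make a bounded number of specific short paths rainbow (e.g.\ ensure $c'(xy) \ne c'(xz)$ for the chosen $z$, $c'(yp)$ differs from adjacent edges on one chosen path, etc.), which is a finite consistency check with only $2$ colors but ample freedom because the constraints involve disjoint sets of edges. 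A secondary point to be careful about: when $v \in P_1$ (so $v$ is nonadjacent to $x$ but this is fine since we need $x$-$v$ paths, not an edge) or $v \in P_2$, the imitation is partially broken and one falls back on routing through a third part; again $t \ge 3$ saves us.
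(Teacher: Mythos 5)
Your high-level strategy --- add one new vertex to each of the first two parts and let each imitate an existing vertex of its part --- is the same as the paper's, but your write-up stops exactly where the work happens, and this is a genuine gap rather than bookkeeping. You leave the colors of the edges from $x$ into $P_2\cup\{y\}$ and from $y$ into $P_1\cup\{x\}$ unspecified (``these I would assign by hand, and finding a consistent choice here is the crux''), and the verification of the affected pairs is only a forecast of what you expect to work. Your proposed fallback, rainbow detours of length two through a third part, can actually fail: for the pair $x,y$, every length-two path must pass through some $z\notin P_1\cup P_2$, and under your imitation rule such a path is rainbow only if $c(pz)\neq c(qz)$; nothing prevents $p$ and $q$ from having identical color patterns to every vertex outside $P_1\cup P_2$, in which case no rainbow detour of length two exists and you would need longer paths you have not produced. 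Likewise, the pair $\{x,p\}$ (the new vertex versus the vertex it imitates) cannot be handled by lifting $p$--$v$ paths at all, and you do not address it.

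The paper closes these holes with two ideas your sketch does not reach. First, it extends the imitation rule to the edges into the opposite new part as well, so that only three edges are hand-colored (in your notation $xy$, $xq$, and $yp$, where $q\in P_2$ is the vertex $y$ imitates): it sets $c'(xy)=c(pq)$ and gives $xq$ and $yp$ the other color, so the four-cycle $x,y,p,q$ alternates colors and immediately yields the two internally disjoint rainbow paths for each of the pairs $\{x,p\}$ and $\{y,q\}$. Second, because $c'(xy)=c(pq)$ and $x,y$ imitate $p,q$ elsewhere, the colored graph induced on $V(K')\setminus\{p,q\}$ is isomorphic \emph{as a colored graph} to the original $(K,c)$; this disposes in one stroke of the pair $\{x,y\}$ and of every pair consisting of one new vertex and one vertex other than $p,q$, with a similar isomorphism (up to recoloring the single edge $xq$, which only affects the one-edge path and hence cannot destroy rainbowness) handling $\{x,q\}$ and $\{y,p\}$. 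In particular no detour through a third part is ever needed. To turn your proposal into a proof, commit to this concrete coloring and carry out the short check of the four remaining pairs.
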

	
	\begin{proof}
		Let $K:=K_{n_1+1,n_2+1,\dots,n_t}$ with partite sets $A_1\cup A_2\cup \dots\cup A_t=V(K)$, where $|A_1|=n_1+1$, $|A_2|=n_2+1$ and $|A_i|=n_i$ for all $3\leq i\leq t$. Let $a_1,a_1'\in A_1$ and $a_2,a_2'\in A_2$ be four distinct vertices, and let $K':=K[V(G)\setminus\{a_1,a_2\}]$. Note that $K'\cong K_{n_1, n_2,\dots, n_t}$, so $\rc_k(K')=2$ by assumption. Let $c':E(K')\to [2]$ be a coloring such that $(K',c')$ is rainbow $2$-connected. Assume without loss of generality that $c'(a_1'a_2')=1$. Define $c:E(K)\to [2]$ by
		\[
		c(uv)=\begin{cases} c'(uv)&\text{ if }u,v\in V(K'),\\
			c'(a_i'v)&\text{ if } u=a_i\text{ and }v\in V(K')\text{ for some }i\in [2]\\
			1&\text{ if }uv=a_1a_2,\\
			2&\text{ if }uv=a_1a_2'\text{ or }uv=a_1'a_2.
		\end{cases}
		\]
		We claim that $(K,c)$ is rainbow $2$-connected. Indeed, first note that since $(K',c')$ is rainbow $2$-connected and the restriction of $c$ down to $K'$ is $c'$, any pair $u,v\in V(K')$ has $2$ pairwise internally disjoint rainbow paths connecting $u$ and $v$. Furthermore, $K'':=K[V(K)\setminus \{a_1',a_2'\}]\cong K'$, and by the way that $c$ is defined, if $c''$ is the restriction of $c$ down to $K''$, then $(K'',c'')\cong (K',c')$, so any pair $u,v\in V(K'')$ is rainbow $2$-connected. The only pairs of vertices that remain to check are $a_1a_1'$, $a_1a_2'$, $a_1'a_2$ and $a_2a_2'$.
		
		First consider $a_ia_i'$ where $i \in [2]$. The paths $(a_i, a_j, a_i')$ and $(a_i, a_j', a_i')$ are pairwise internally disjoint rainbow paths, where $j\neq i$ and $j \in [2]$. Now consider $a_ia_j'$, where $\{i,j\}=[2]$. Note that $K''':=K[V(K)\setminus \{a_i',a_j\}]\cong K'$, and if we define $c'''$ to be the restriction of $c$ to $K'''$, but with the edge $a_ia_j'$ recolored to $c(a_ia_j)$, then $(K''',c''')\cong (K',c')$, and so $a_i$ and $a_j'$ are connected by $2$ internally disjoint rainbow paths in $(K''',c''')$. Indeed, the only $a_i,a_j'$-path affected by the edge $a_ia_j'$ is the one edge path, which is rainbow regardless of the color of $a_ia_j'$, so the number of pairwise internally disjoint rainbow paths remains the same. 
	\end{proof}
	
	\begin{theorem}
		Let $m,n\in\mathbb{N}$ with $n\geq 2$ and $1 \leq m \leq 4^{\left\lfloor \frac{n}{2}\right\rfloor}$. Then,
		\[
		\rc_2(K_{m, n, n}) = 2.
		\]
	\end{theorem}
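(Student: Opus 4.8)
The plan is to exhibit an explicit $2$-coloring of $E(K_{m,n,n})$ and verify that every pair of vertices is joined by two internally disjoint rainbow paths. Since only two colors are available, a rainbow path has length at most $2$; so for every pair $u,v$ we must find either the edge $uv$ (if present) together with a rainbow path of length $2$, or two internally disjoint rainbow $2$-paths. The two $2$-paths $u w_1 v$ and $u w_2 v$ are rainbow exactly when $c(uw_i)\neq c(w_iv)$, and internally disjoint means $w_1\neq w_2$. So the whole problem reduces to a counting/coding condition on the coloring.

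First I would set up coordinates. Call the parts $M$ (size $m$), $N_1$, $N_2$ (each size $n$). The key idea, suggested by the hypothesis $m\le 4^{\lfloor n/2\rfloor}$, is to use the edges from a vertex of $M$ to $N_1$ (or to $N_2$) as a "codeword": split $N_1$ into $\lfloor n/2\rfloor$ pairs (discard one vertex if $n$ is odd), and for each vertex $x\in M$ assign a string in $\{$one of $4$ patterns$\}^{\lfloor n/2\rfloor}$ recording, on each pair $\{y,y'\}$, the ordered pair $(c(xy),c(xy'))\in\{1,2\}^2$. Since $m\le 4^{\lfloor n/2\rfloor}$ we can make all these codewords distinct, which guarantees that for any two distinct $x,x'\in M$ there is a pair $\{y,y'\}\subseteq N_1$ on which their patterns differ — and one checks that differing patterns on a pair always yields a rainbow $2$-path through $y$ or through $y'$ between $x$ and $x'$; a second such path comes from doing the same construction independently on $N_2$, so the $M$–$M$ pairs are handled and are internally disjoint because one path uses $N_1$ and the other uses $N_2$. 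I would color the edges inside $N_1\cup N_2$ (i.e. between $N_1$ and $N_2$) so that, say, $c(yz)$ alternates suitably to make the remaining pairs work.

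Then I would run through the remaining pair types. For $x\in M$, $y\in N_1$: the edge $xy$ is present, so I just need one rainbow $2$-path $x z y$ with $z\in N_2\cup(M\setminus x)$, and the codeword freedom plus the $N_1$–$N_2$ coloring gives this with room to spare (here $n\ge 2$ is used). For $y\in N_1$, $y'\in N_1$ (same part, no edge): I need two internally disjoint rainbow $2$-paths, which can go through two vertices of $M$ or through $N_2$; since $m\ge1$ and $n\ge2$, combining a path through $M$ and a path through $N_2$ suffices, provided the $M$-codewords and the $N_1$–$N_2$ coloring were chosen to avoid a monochromatic obstruction — this is where I would be slightly careful in the construction, e.g. reserving that every $x\in M$ sees both colors on $N_1$ and on $N_2$ (which the codeword scheme can be arranged to force, shrinking the usable codeword set by only a constant factor, still $\le 4^{\lfloor n/2\rfloor}$ after a mild adjustment — or one simply checks the naive bound already leaves enough room). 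The $y\in N_1$, $z\in N_2$ pair is the easiest: the edge $yz$ exists and a length-$2$ detour through $M$ is rainbow for a suitable choice of $x$.

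The main obstacle I anticipate is the bookkeeping needed to make a single coloring simultaneously satisfy all pair types — in particular ensuring that the "distinct codewords" requirement for $M$–$M$ pairs is compatible with the local requirements (each $M$-vertex seeing both colors on each of $N_1,N_2$; the $N_1$–$N_2$ edge coloring cooperating with the same-part pairs in $N_1$ and in $N_2$). I expect this to come down to showing that the set of codewords forbidden by these local conditions is small enough that at least $m\le 4^{\lfloor n/2\rfloor}$ admissible ones remain, which should follow either by a direct count or by noting the bound $4^{\lfloor n/2\rfloor}$ already has slack (it is essentially $2^n$, and the local constraints cost only lower-order factors). Everything else is a finite verification of the $c(uw)\neq c(wv)$ condition case by case.
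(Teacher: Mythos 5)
Your overall strategy (reduce to length-$\le 2$ rainbow paths, encode each vertex of the small part $M$ by its color vector to the other parts, use distinctness of codewords for the $M$--$M$ pairs) is the same as the paper's, but there is a genuine gap where you wave at ``slack'' in the bound: there is none. Take $n$ even, so $4^{\lfloor n/2\rfloor}=2^{n}$. For $x\in M$ and $y\in N_1$, the only possible second rainbow path besides the edge $xy$ is a $2$-path $x\,z\,y$ with $z\in N_2$, and such a path exists iff the vector $(c(xz))_{z\in N_2}$ differs from $(c(zy))_{z\in N_2}$ in at least one coordinate. So each $y\in N_1$ forbids one full $N_2$-color-vector from ever being assigned to a vertex of $M$. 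In your scheme the codewords on $N_2$ range over all of $\{1,2\}^{n}$, i.e.\ exactly $2^{n}=m$ vectors, so when $m=4^{\lfloor n/2\rfloor}$ you cannot simultaneously make them distinct and avoid even a single forbidden vector: your hoped-for fix (``every $x$ sees both colors on each side'') does not even exclude the bad vectors (a vector with a single $0$ sees both colors yet clashes with the matching coloring), and ``shrinking the usable codeword set'' by any amount makes the extremal case unreachable. A similar unexamined reservation is needed for the $N_1$--$N_2$ pairs: ``a suitable choice of $x$'' must exist for \emph{every} such pair, which already fails for generic codewords when $m=1$.

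The paper closes exactly this hole by a structural restriction you do not impose: each $x\in M$ colors \emph{both} edges to a given pair of $N_1$ (resp.\ $N_2$) with the same bit, so the codeword is a string in $\{0,1\}^{2\lfloor n/2\rfloor}$ --- still exactly $4^{\lfloor n/2\rfloor}$ codewords, but now \emph{every} codeword is admissible. Combined with the perfect-matching coloring between $N_1$ and $N_2$ ($c(b_ic_i)=0$, all other cross edges $1$), for any $x\in M$ and $y=b_j$ the two vertices of the pair of $N_2$ matched to $y$'s pair receive the same color from $x$ but different colors from $y$, so one of the two detours is rainbow no matter what $x$'s codeword is. One codeword ($s$ ones followed by $s$ zeros) is reserved for a designated vertex of $M$ to serve all $N_1$--$N_2$ pairs. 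Without these two ingredients your construction cannot be completed at $m=4^{\lfloor n/2\rfloor}$, so as written the proposal does not prove the stated bound.
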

	
	\begin{proof}
		First, we provide some definitions which will be helpful for giving a coloring. Let $s:=\left\lfloor \frac{n}{2}\right\rfloor$. Let $\mathcal{B}$ denote the collection of bit strings of length $2s$, and let $\mathbf{b}\in \mathcal{B}$ denote the bit string with $s$ $1$'s followed by $s$ $0$'s. Let $A\subseteq \mathcal{B}$ be a subset of $\mathcal{B}$ with $|A|=m$, and such that $\mathbf{b}\in A$. Let $A=\{a_1,a_2,\dots,a_m\}$ with $a_1=\mathbf{b}$, and let $a_{i,j}$ denote the $j$th bit of $a_i$.
		
		Let $B=\{b_1,b_2,\dots,b_n\}$ and $C=\{c_1,c_2,\dots,c_n\}$ be sets with $n$ distinct elements. Furthermore, for each $i\in [s-1]$, let $B_i=\{b_{2i-1},b_{2i}\}$ and $C_i=\{c_{2i-1},c_{2i}\}$ be pairs of vertices, and if $n$ is even, let $B_s=\{b_{2s-1},b_{2s}\}$ and $C_s=\{c_{2s-1},c_{2s}\}$, otherwise if $n$ is odd, let $B_s=\{b_{2s-1},b_{2s},b_{2s+1}\}$ and $C_s=\{c_{2s-1},c_{2s},c_{2s+1}\}$. Now, let $K$ be the copy of $K_{m,n,n}$ with partite sets $A$, $B$ and $C$. We now define a coloring $c:E(K)\to \{0,1\}$. Let
		\[
		c(uv)=\begin{cases}
			0 &\text{ if }uv=b_ic_i\text{ for some }i\in [n],\\
			1 &\text{ if }uv=b_ic_j\text{ for }i\neq j, i,j\in [n],\\
			a_{i,t} & \text{ if }uv=a_ib_j\text{ and }b_j\in B_t\text{ for }i\in [m], j\in [n],t\in [s],\\
			a_{i,s+t} &\text{ if }uv=a_ic_j\text{ and }c_j\in C_t\text{ for }i\in [m], j\in [n],t\in [s].
		\end{cases}
		\]
		
		We claim that $(K,c)$ is rainbow $2$-connected. Indeed, consider an arbitrary pair of vertices $u,v\in V(K)$.
		
		\textbf{Case 1:} $u,v\in B$ or $u,v\in C$, assume without loss of generality that $u,v\in B$, say $u=b_i$ and $v=b_j$. Then $(b_i,c_i,b_j)$ and $(b_i,c_j,b_j)$ are internally disjoint rainbow paths from $u$ to $v$.
		
		\textbf{Case 2:} $u\in B$ and $v\in C$, say $u=b_i$ and $v=c_j$. Then $(b_i,c_j)$ and $(b_i,a_1,c_j)$ are internally disjoint rainbow paths from $u$ to $v$.
		
		\textbf{Case 3:} $u\in A$ and $v\in B\cup C$, assume without loss of generality that $v\in B$, say $u=a_i$ and $v=b_j$. Then $(a_i,b_j)$ is one rainbow path. Let $t$ be such that $b_j\in B_t$, and let $c^*\in C_t\setminus\{c_j\}$. Then either $(a_i,c^*,b_j)$ or $(a_i,c_j,b_j)$ is a second rainbow path, depending on if $a_{i,s+t}$ is $0$ or $1$, respectively.
		
		\textbf{Case 4:} $u,v\in A$, say $u=a_i$ and $v=a_j$. Then there must be a value $t\in [2s]$ such that $a_{i,t}\neq a_{j,t}$. Then if $t\leq s$, $(a_i,b_{2t-1},a_j)$ and $(a_i,b_{2t},a_j)$ form a pair of internally disjoint rainbow paths, and if $t>s$, then $(a_i,c_{2(t-s)-1},a_j)$ and $(a_i,c_{2(t-s)},a_j)$ form a pair of internally disjoint rainbow paths.
		
		Thus, in all cases, every pair of vertices is connected by two internally disjoint rainbow paths, so $(K,c)$ is rainbow $2$-connected.
	\end{proof}
	
	\begin{theorem}\label{thm: 2-4-16}
		$\rc_2(K_{2,4,16}) = 2$.
	\end{theorem}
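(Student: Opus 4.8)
The plan is to exhibit an explicit edge $2$-coloring of $K_{2,4,16}$ and verify directly that it is rainbow $2$-connected. The observation that makes this tractable is that with only two colors every rainbow path uses at most two edges. Writing $A,B,C$ for the parts of sizes $2,4,16$, this means: for a pair $u,v$ in the \emph{same} part we must produce two distinct vertices $w,w'$ (necessarily lying in other parts and adjacent to both $u$ and $v$) with $c(uw)\neq c(wv)$ and $c(uw')\neq c(w'v)$; and for a pair $u,v$ in \emph{different} parts the edge $uv$ is already one rainbow path, so it suffices to find one vertex $w$ in the third part with $c(uw)\neq c(wv)$.

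Concretely, I would take $A=\{a_1,a_2\}$, $B=\{b_1,b_2,b_3,b_4\}$, and identify the $16$ vertices of $C$ with the set of vectors $w=(w_1,\dots,w_6)\in\{0,1\}^6$ satisfying $w_1+w_2\equiv 1$ and $w_3+w_4+w_5+w_6\equiv 1\pmod 2$ (there are exactly $2\cdot 8=16$ such vectors). Define $c\colon E(K_{2,4,16})\to\{0,1\}$ by: $c(a_\ell b_j)=0$ for $j\in\{1,2\}$ and $c(a_\ell b_j)=1$ for $j\in\{3,4\}$ (both $\ell$); $c(a_1 w)=w_1$ and $c(a_2 w)=w_2$ for $w\in C$; and $c(b_j w)=w_{j+2}$ for $w\in C$, $j\in[4]$. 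Thus the color-vector of a vertex $w\in C$ to $(a_1,a_2,b_1,b_2,b_3,b_4)$ is exactly $w$.

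The verification then splits into the six pair types $(A,A),(B,B),(C,C),(A,B),(A,C),(B,C)$. The $(C,C)$ case reduces to checking that the chosen $16$ vectors form a code of minimum Hamming distance $2$ (the number of witnesses for $w,w'$ equals their Hamming distance), which the two parity constraints guarantee, since agreement on all but one coordinate would violate one of them. The $(B,C)$ case is the tightest, as the only candidate witnesses lie in $A$: a witness for $b_j,w$ exists iff $(w_1,w_2)\neq(c(b_j a_1),c(b_j a_2))$, and this holds because $(w_1,w_2)\in\{(0,1),(1,0)\}$ while $(c(b_j a_1),c(b_j a_2))\in\{(0,0),(1,1)\}$. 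The $(A,C)$ case is where the second parity constraint earns its keep: a witness $b_j$ for $a_\ell,w$ (i.e.\ with $c(a_\ell b_j)\neq w_{j+2}$) fails to exist exactly when $(w_3,w_4,w_5,w_6)=(0,0,1,1)$, and that vector is excluded since it has even weight. The remaining cases $(A,A)$, $(B,B)$, $(A,B)$ are routine counting (e.g.\ for $a_1,a_2$ every vertex of $C$ is a witness since $w_1\neq w_2$).

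The main obstacle is the design of the coloring rather than the verification: one must choose $C$ so that all the "scarce-witness" pairs — those between $B$ and $C$ (only two possible witnesses, in $A$) and between $A$ and $C$ (only four possible witnesses, in $B$) — are simultaneously served while leaving the $(C,C)$ pairs enough room to form a distance-$2$ code. The parity description is exactly what makes these compatible; realizing that $B$ must use only two of the four possible "$A$-patterns" and that the $B$-coordinates of $C$ must avoid the pattern aligned with $B$'s $A$-patterns is the part I expect to take the most thought. It is also worth remarking in the writeup that $16$ is best possible: the same scarce-witness analysis (a $(B,C)$-disjointness argument forcing $C$ to split into at most two "$A$-type" classes, each of whose $B$-color-vectors is a distance-$2$ code of length $4$, hence of size at most $8$) shows $\rc_2(K_{2,4,17})\geq 3$.
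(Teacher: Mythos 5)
Your construction is correct and is essentially the paper's coloring in disguise: the paper identifies the $16$ vertices of $C$ with the eight odd-weight length-$4$ bit strings, each taken with the two $A$-patterns $(0,1)$ and $(1,0)$, which is exactly your length-$6$ code with the two parity checks $w_1+w_2\equiv 1$ and $w_3+w_4+w_5+w_6\equiv 1 \pmod 2$, and the case analysis (distance $\ge 2$ for $C$--$C$ pairs, complementary $A$-patterns for $B$--$C$ pairs, an excluded $B$-pattern for $A$--$C$ pairs) matches the paper's. The closing aside claiming $\rc_2(K_{2,4,17})\ge 3$ is not substantiated as sketched --- the step forcing $C$ into \emph{at most two} $A$-type classes does not follow if $B$ uses fewer than three $A$-patterns --- but this does not affect the proof of the stated theorem.
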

	
	\begin{proof}
		Let $A=\{a_1,a_2\}$, $B=\{b_1,b_2,b_3,b_4\}$ and $C=\{c_1,c_2,\dots,c_8,c_1',c_2',\dots,c_8'\}$ be the partite sets of $K:=K_{2,4,16}$. Let $\mathcal{B}=\{\mathbf{b}_1,\mathbf{b}_2,\dots,\mathbf{b}_8\}$ denote the collection of bit strings of length $4$ with an odd number of $0$'s, and let $b_{i,j}$ denote the value of the $j$th bit of $\mathbf{b}_i$.  We now define a coloring $c:E(K)\to \{0,1\}$. Let
		\[
		c(uv)=\begin{cases}
			0 &\text{ if }uv=a_ib_j\text{ for some }i\in [2],j\in[4]\\
			0 &\text{ if }uv=a_1c_j\text{ or }uv=a_2c_j'\text{ for }j\in [8],\\
			1 &\text{ if }uv=a_1c_j'\text{ or }uv=a_2c_j\text{ for }j\in [8],\\
			b_{i,j} & \text{ if }uv=b_jc_i\text{ or }uv=b_jc_i'\text{ for }i\in [8], j\in [4].
		\end{cases}
		\]
		
		We claim that $(K,c)$ is rainbow $2$-connected. Indeed, consider an arbitrary pair of vertices $x,y\in V(K)$.
		
	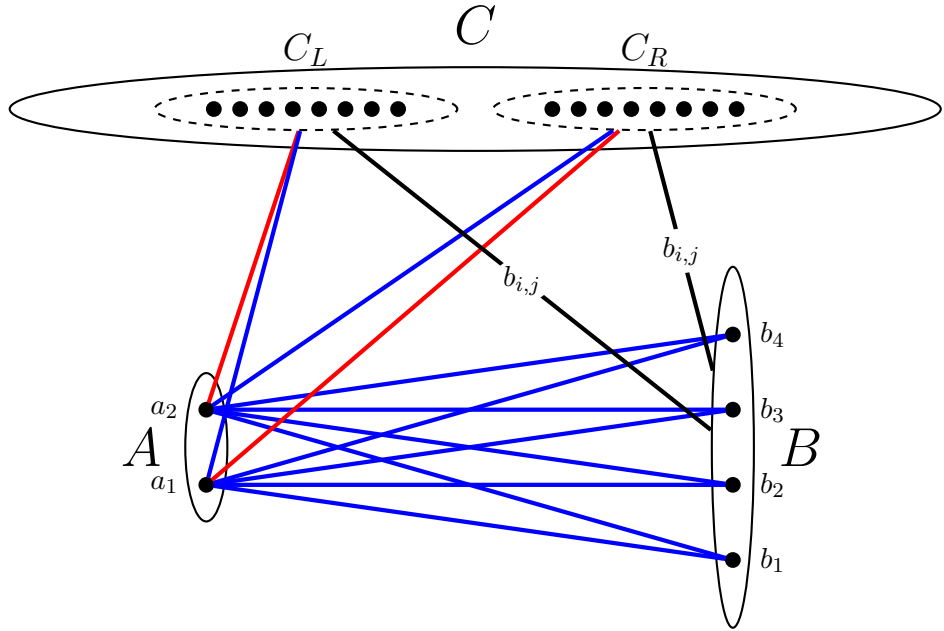
\begin{figure}[!b]
		\begin{center}
			\begin{tikzpicture}[thick,
				every node/.style={draw,ellipse},
				every fit/.style={ellipse,draw,inner sep=-2pt}]
				\begin{scope}[yshift = 2 cm]
					\begin{scope}[xshift = -2.25 cm]
						\foreach \i in {1,2,3,4,5,6,7,8}
						\node[transform shape, fill=black, circle, draw, scale = 0.5] (c\i) at (0.35*\i, 0) {};
						\node[dashed, fit= (c1) (c2) (c3) (c4) (c5) (c6) (c7) (c8), inner sep = 1 mm, label = above: \Large $C_L$] (allC1) {};
					\end{scope}
					\begin{scope} [xshift = 2.25 cm]
						\foreach \i in {1,2,3,4,5,6,7,8}
						\node[transform shape, fill=black, circle, draw, scale = 0.5] (d\i) at (0.35*\i, 0) {};
						\node[dashed, fit= (d1) (d2) (d3) (d4) (d5) (d6) (d7) (d8), inner sep = 1 mm, label = above: \Large $C_R$] (allD) {};
					\end{scope}
					\node[fit= (allC1) (allD), inner sep = 1 mm, label = above: \huge $C$] (allC) {};
				\end{scope}

				\begin{scope} [xshift = 5 cm, yshift = -5 cm]
					\foreach \i in {1,2,3,4}
					\node[fill=black, circle, draw, scale = 0.5, label = right: $b_{\i}$] (b\i) at (0, \i) {};
					\node[fit= (b1) (b2) (b3) (b4), inner sep = 1 mm, label=right: \huge $B$] (allB) {};
				\end{scope}
				\begin{scope} [xshift = -2 cm, yshift = -4 cm]
					\foreach \i in {1,2}
					\node[fill=black, circle, draw, scale = 0.5, label = left: $a_{\i}$] (a\i) at (0,\i) {};
					\node[ fit= (a1) (a2), inner sep = 1 mm, label = left: \huge $A$] (allA) {};
				\end{scope}
				
				\draw [blue, ultra thick] (a1) to (b1);
				\draw [blue, ultra thick] (a1) to (b2);
				\draw [blue, ultra thick] (a1) to (b3);
				\draw [blue, ultra thick] (a1) to (b4);
				
				\draw [blue, ultra thick] (a2) to (b1);
				\draw [blue, ultra thick] (a2) to (b2);
				\draw [blue, ultra thick] (a2) to (b3);
				\draw [blue, ultra thick] (a2) to (b4);
				
				\draw [red, ultra thick] (a2) to (allC1);
				\draw [red, ultra thick] (a1) to (allD);
				\draw [blue, ultra thick] (a1) to (allC1);
				\draw [blue, ultra thick] (a2) to (allD);
				\draw  (allC1)[black, ultra thick] -- node[draw = white, fill = white, inner sep = 0 mm] {$b_{i,j}$} (allB) ;
				\draw  (allD)[black, ultra thick] -- node[draw = white, fill = white, inner sep = 0 mm] {$b_{i,j}$} (allB) ;
			\end{tikzpicture}
		\end{center}
		\caption{The coloring of $K_{2,4,16}$ used in the proof of Theorem~\ref{thm: 2-4-16}. Here $C_L$ and $C_R$ represent the sets $\{c_1, \ldots, c_8\}$ and $\{c_1', \ldots, c_8'\}$, respectively.}
		\label{fig::2-4-16}
	\end{figure}	
		
		\textbf{Case 1:} $\{x,y\}=\{a_1,a_2\}$. Then $(x,c_1,y)$ and $(x,c_2,y)$ are pairwise internally disjoint rainbow paths.
		
		\textbf{Case 2:} $x\in A$ and $y\in B\cup C$ or $x\in B\cup C$ and $y\in A$, assume without loss of generality that $x\in A$, say $x=a_1$. First assume $y\in C$, say $y=c_i$. Note that for each $i\in [8]$, there exists a $j\in [4]$ such that $c_ib_j$ is color $1$, so $(x,y)$ and $(x,b_j,y)$ are pairwise internally disjoint rainbow paths. Now, if $y\in B$, say $y=b_i$, then note that for every $i\in [4]$ there exists a $j\in [8]$ such that $b_ic_j$ is color $1$. Hence, $(x,y)$ and $(x,c_j,y)$ are pairwise internally disjoint rainbow paths.
		
		\textbf{Case 3:} $x\in B$ and $y\in C$ or $x\in C$ and $y\in B$, assume $x\in B$. Then $(x,y)$ along with either $(x,a_1,y)$ or $(x,a_2,y)$ are pairwise internally disjoint rainbow paths.
		
		\textbf{Case 4:} $\{x,y\}\subseteq B$. Assume by symmetry that $x=b_1$ and $y=b_2$. Let $\mathbf{b}_i=0111$. Then $(x,c_i,y)$ and $(x,c_i',y)$ are pairwise internally disjoint rainbow paths.
		
		\textbf{Case 5:} $x=c_i$ and $y=c_j'$ or $x=c_i'$ and $y=c_j$ for some $i,j\in[8]$ (possibly with $i=j)$. Then $(x,a_1,y)$ and $(x,a_2,y)$ are pairwise internally disjoint rainbow paths.
		
		\textbf{Case 6:} $x=c_i$ and $y=c_j$ or $x=c_i'$ and $y=c_j'$ for some $i,j\in [8]$, assume $x=c_i$ and $y=c_j$. Note that $\mathbf{b}_i$ and $\mathbf{b}_j$ differ in at least two bits by construction, say $b_{i,s}\neq b_{j,s}$ and $b_{i,s'}\neq b_{j,s'}$ for some $s,s'\in [4]$. Then $(x,b_s,y)$ and $(x,b_{s'},y)$ are pairwise internally disjoint rainbow paths.
		
		Thus, in all cases, $x$ and $y$ are connected by two pairwise internally disjoint rainbow paths.
	\end{proof}

 \section{Acknowledgements}
 The authors would like to thank the Illinois Geometry Lab for facilitating this research project. This material is based upon work supported by the National Science Foundation under Grant No. DMS-1449269. Any opinions, findings, and conclusions or recommendations expressed in this material are those of the authors and do not necessarily reflect the views of the National Science Foundation.

	\bibliographystyle{abbrv}

\end{document}